\providecommand{\U}[1]{\protect\rule{.1in}{.1in}}
\newtheorem{theorem}{Theorem}
\newtheorem{definition}[theorem]{Definition}
\newtheorem{example}[theorem]{Example}
\newtheorem{notation}[theorem]{Notation}
\newtheorem{proposition}[theorem]{Proposition}
\newenvironment{proof}[1][Proof]{\textbf{#1.} }{\ \rule{0.5em}{0.5em}}
\begin{document}

\title{Axiomatic Differential Geometry III-1\\-Its Landscape-\\Chapter 1: Model Theory I}
\author{Hirokazu NISHIMURA\\Institute of Mathematics\\University of Tsukuba\\Tsukuba, Ibaraki, 305-8571, JAPAN}
\maketitle

\begin{abstract}
In this paper is proposed a kind of model theory for our axiomatic
differential geometry. It is claimed that smooth manifolds, which have
occupied the center stage in differential geometry, should be replaced by
functors on the category of Weil algebras. Our model theory is geometrically
natural and conceptually motivated, while the model theory of synthetic
differential geometry is highly artificial and exquisitely technical.

\end{abstract}

\section{\label{s1}Introduction}

Consider two smooth curves sharing a unique point, say the origin, in the
$2$-dimensional Euclidean space. This is a familiar situation in high school
mathematics. From the authentic viewpoint of contemporary mathematics, whether
the two curves are transversal at the common point or they are tangential
therein, their intersection is the shared point, the same figure consisting of
a unique point. Now the story is over to most of the schoolboys and
schoolgirls. However, a good mathematician endowed with a cornucopia of
geometric acumen and academically sophisticated instinct feels a flavor of
impropriety here. What is wrong ?

To resolve the above paradoxical situation, \textit{synthetic differential
geometers} resort to the resurrection of \textit{nilpotent infinitesimals},
which were abandoned as anathema and replaced by so-called $\varepsilon
-\delta$ arguments in the 19th century. They insist that, were one able to
recognize nilpotent infinitesimals by looking closer and closer and using
highly sensitive microscopes if necessay, he or she would find distinct
intersections at the above two contradistinctive cases. The intersection of
the transversal curves is really one point, while that of the tangential
curves would be that point accompanied by its microcosm of numerous lurking
points infinitesimally close to it. In any case, synthetic differential
geometers were forced to invent an artifact, called \textit{well-adapted
models}, in which they are generously able to indulge in their favorite
nilpotent infinitesimals. For synthetic differential geometry, the reader is
referred to \cite{kock} and \cite{lav}.

Our solution to the above paradox is more realistic and highly geometrical. We
plumb impropriety in the definition of a \textit{figure}. It is nothing but
platitudinous to say that \textit{geometry is the study of figures}, but the
notion of a figure in mathematics has changed dramatically several times since
the days of ancient Greeks such as Euclid and Pythagoras. We ask again what a
figure is, as Riemann did at his trial lecture for his habilitation (entitled
''The hypotheses on which geometry is based''). We insist that the definition
of a figure should be hierarchical. The figure is to be depicted not only at
its $0$-th order level corresponding to the Weil algebra $\mathbf{R}$ (the
degenerate Weil algebra of real numbers) but at various levels corresponding
to various Weil algebras. Consider the two curves tangential at the unique
common point as above, whose intersection at the $0$-th order level is surely
one point. The depiction of each curve at the $1$-st order level corresponding
to the Weil algebra $\mathbf{R}\left[  x\right]  /\left(  x^{2}\right)  $ is
its tangent bundle, and the intersection of the two curves at the $1$-st order
level is the same tangent space of both curves to the tangential point. We
note in passing that if the intersection of two figures should always be a
figure, this example forces us to admit a figure whose $0$-th order
description is one point but whose $1$-st order level description is a
one-dimensional linear space over $\mathbf{R}$. We note also that the
depiction of a figure at the $0$-th order level does not determine its
depiction at the $1$-st order level uniquely. Formally speaking from a coign
of vantage of category theory, we propose that a \textit{figure} is a functor
on the category of Weil algebras.

The principal objective in this paper is to give a model theory to the
axiomatics in \cite{nishi3} by exploiting the above notion of a figure. The
model theory is explained in \S \ref{s4}. We review our axiomatics of
\cite{nishi3}\ in \S \ref{s3}. We give some preliminaries on Weil algebras and
convenient categories in \S \ref{s2}.

\section{\label{s2}Preliminaries}

\subsection{Weil Algebras}

Let $k$\ be a commutative ring. The category of Weil algebras over $k$\ (also
called Weil $k$-algebras) is denoted by $\mathbf{Weil}_{k}$. It is well known
that the category $\mathbf{Weil}_{k}$\ is left exact. The initial and terminal
object in $\mathbf{Weil}_{k}$\ is $k$ itself. Given two objects $W_{1}$ and
$W_{2}$ in the category $\mathbf{Weil}_{k}$, we denote their tensor algebra by
$W_{1}\otimes_{k}W_{2}$. For a good treatise on Weil algebras, the reader is
referred to \S \ 1.16 of \cite{kock}. Given a left exact category
$\mathcal{K}$\ and a $k$-algebra object $\mathbb{R}$\ in $\mathcal{K}$, there
is a canonical functor $\mathbb{R}\underline{\otimes}_{k}\cdot$\ (denoted by
$\mathbb{R\otimes\cdot}$\ in \cite{kock}) from the category $\mathbf{Weil}%
_{k}$ to the category of $k$-algebra objects and their homomorphisms in
$\mathcal{K}$.

\subsection{Convenient Categories}

The category of topological spaces and continuous mappins is by no means
cartesian closed. In 1967 Steenrod \cite{st} popularized the idea of
\textit{convenient category} by announcing that the category of compactly
generated spaces and continuous mappings renders a good setting for algebraic
topology. The proposed category is cartesian closed, complete and cocomplete,
and contains all CW complexes.

About the same time, an attempt to give a convenient category to smooth spaces
began, and we have a few candidates. For a thorough study on the relationship
among these proposed candidates, the reader is referred to \cite{stacey}, in
which the reader finds by way of example that the category of Fr\"{o}licher
spaces is a full subcategory of that of Souriau spaces, and the category of
Souriau spaces is in turn a full subcategory of Chen spaces. We have no
intention to discuss which is the best convenient category of smooth spaces
here. We content ourselves with denoting some of such categories by
$\mathbf{Smooth}$, which is required to be complete and cartesian closed at
least containing the category $\mathbf{Mf}$ of smooth manifolds as a full subcategory.

\section{\label{s3}The Axiomatics}

We review the axiomatics in \cite{nishi3}.

\begin{definition}
A \underline{DG-category} (DG stands for Differential Geometry) is a quadruple
$\left(  \mathcal{K},\mathbb{R},\mathbf{T},\alpha\right)  $, where

\begin{enumerate}
\item $\mathcal{K}$ is a category which is left exact and cartesian closed.

\item $\mathbb{R}$ is a commutative $k$-algebra object in $\mathcal{K}$.

\item Given a Weil $k$-algebra $W$, $\mathbf{T}^{W}:\mathcal{K}\rightarrow
\mathcal{K}$ is a left exact functor for any Weil $k$-algebra $W$ subject to
the condition that $\mathbf{T}^{k}:\mathcal{K}\rightarrow\mathcal{K}$ is the
identity functor, while we have
\[
\mathbf{T}^{W_{2}}\circ\mathbf{T}^{W_{1}}=\mathbf{T}^{W_{1}\otimes_{k}W_{2}}%
\]
for any Weil $k$-algebras $W_{1}$ and $W_{2}$.

\item Given a Weil $k$-algebra $W$, we have
\[
\mathbf{T}^{W}\mathbb{R}=\mathbb{R}\underline{\otimes}_{k}W
\]

\item $\alpha_{\varphi}:\mathbf{T}^{W_{1}}\overset{\cdot}{\rightarrow
}\mathbf{T}^{W_{2}}$ is a natural transformation for any morphism
$\varphi:W_{1}\rightarrow W_{2}$ in the category $\mathbf{Weil}_{k}$ such that
we have
\[
\alpha_{\psi}\cdot\alpha_{\varphi}=\alpha_{\psi\circ\varphi}%
\]
for any morphisms $\varphi:W_{1}\rightarrow W_{2}$ and $\psi:W_{2}\rightarrow
W_{3}$ in the category $\mathbf{Weil}_{k}$, while we have
\[
\alpha_{\mathrm{id}_{W}}=\mathrm{id}_{\mathbf{T}^{W}}%
\]
for any identity morphism $\mathrm{id}_{W}:W\rightarrow W$ in the category
$\mathbf{Weil}_{k}$.

\item Given a morphism $\varphi:W_{1}\rightarrow W_{2}$ in the category
$\mathbf{Weil}_{k}$, we have
\[
\alpha_{\varphi}\left(  \mathbb{R}\right)  =\mathbb{R}\underline{\otimes}%
_{k}\varphi
\]

\end{enumerate}
\end{definition}

\section{\label{s4}Model Theory}

Let $\mathbf{U}$\ be a complete and cartesian closed category with
$\mathbb{R}$\ being a $k$-algebra object in $\mathbf{U}$.

\begin{notation}
We denote by $\mathcal{K}_{\mathbf{U}}$\ the category whose objects are
functors from the category $\mathbf{Weil}_{k}$ to the category $\mathbf{U}$
and whose morphisms are their natural transformations.
\end{notation}

It is easy to see that

\begin{proposition}
The category $\mathcal{K}_{\mathbf{U}}$\ is complete and cartesian closed.
\end{proposition}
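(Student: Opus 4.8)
The plan is to treat completeness and cartesian closedness separately, in each case reducing the question to the corresponding property of $\mathbf{U}$ computed objectwise over $\mathbf{Weil}_{k}$. For completeness I would invoke the standard principle that limits in a functor category are formed pointwise. Given a small diagram $D\colon J\to\mathcal{K}_{\mathbf{U}}$, for each Weil algebra $W$ the assignment $j\mapsto D(j)(W)$ is a diagram in $\mathbf{U}$, which has a limit because $\mathbf{U}$ is complete; setting $(\lim D)(W):=\lim_{j}D(j)(W)$ and letting transition morphisms act by functoriality of the limit yields a functor $\mathbf{Weil}_{k}\to\mathbf{U}$, and the objectwise universal properties assemble into a limiting cone in $\mathcal{K}_{\mathbf{U}}$. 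The only point needing care is that $\mathbf{Weil}_{k}$ be essentially small, so that these are genuinely small limits; this holds because every Weil $k$-algebra is finitely presented, whence there is only a set of them up to isomorphism.

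For cartesian closedness, the binary product $F\times G$ is the pointwise product $(F\times G)(W)=F(W)\times G(W)$, a special case of the pointwise limits above, so the substantive task is to construct a right adjoint to $(-)\times F$. I would define the candidate exponential by the end formula
\[
G^{F}(W)=\int_{W'\in\mathbf{Weil}_{k}}\;\prod_{\mathbf{Weil}_{k}(W,W')}G(W')^{F(W')},
\]
where $G(W')^{F(W')}$ is the internal hom supplied by the cartesian closed structure of $\mathbf{U}$, the inner product is the power (cotensor) of that object by the set $\mathbf{Weil}_{k}(W,W')$, and the outer end is taken over a fixed small skeleton of $\mathbf{Weil}_{k}$. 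Both the power and the end exist because $\mathbf{U}$ is complete. Since $\mathbf{Weil}_{k}(W,W')$ is contravariant in $W$ and the power is contravariant in its indexing set, $G^{F}$ comes out covariant in $W$, hence is a genuine object of $\mathcal{K}_{\mathbf{U}}$.

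It then remains to exhibit a bijection $\mathrm{Nat}(H\times F,G)\cong\mathrm{Nat}(H,G^{F})$, natural in $H$. I would assemble this from a chain of canonical isomorphisms: expand $\mathrm{Nat}(H,G^{F})$ as the end $\int_{W}\hom_{\mathbf{U}}(H(W),G^{F}(W))$; pull the hom-functor through the defining end, since it preserves limits in its second variable; convert the power against $\mathbf{Weil}_{k}(W,W')$ into a set-indexed product of hom-sets; apply the exponential adjunction of $\mathbf{U}$ to rewrite $\hom_{\mathbf{U}}(H(W),G(W')^{F(W')})$ as $\hom_{\mathbf{U}}(H(W)\times F(W'),G(W'))$; swap the two ends by the Fubini theorem for ends; and finally collapse the inner end against the representable $\mathbf{Weil}_{k}(-,W')$ by the Yoneda lemma in end form, which returns $\hom_{\mathbf{U}}(H(W')\times F(W'),G(W'))$, whose end over $W'$ is exactly $\mathrm{Nat}(H\times F,G)$.

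I expect the main obstacle to be bookkeeping rather than conceptual, and to come in two parts. First, one must be scrupulous about size: every end and power above is legitimate only because $\mathbf{Weil}_{k}$ is essentially small, so I would fix the small skeleton once and for all before writing any end. Second, the Yoneda-collapse step is where the argument actually uses representability of the hom-functor, and one must check that the resulting bijection is natural in $H$ (and dinatural in the bound variables) so that it upgrades from a mere family of bijections to an adjunction; verifying this naturality, together with the compatibility of all the intermediate isomorphisms, is the only genuinely laborious point.
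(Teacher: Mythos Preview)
Your argument is correct and in fact more complete than the paper's own treatment. The paper does not really prove the proposition: for completeness it simply cites Theorem~7.5.2 of Schubert, and for cartesian closedness it explicitly defers the argument to a subsequent paper, offering only an analogy with the arrow category and a pointer to Exercise~1.3.7 in Jacobs. Your route---pointwise limits for completeness, and the end formula for the exponential followed by the Fubini/Yoneda calculation for the adjunction---is precisely the standard proof that lies behind those citations, so there is no genuine methodological divergence, only a difference in explicitness.

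One small correction: the essential smallness of $\mathbf{Weil}_{k}$ is not needed for completeness. The limit you form is indexed by the small category $J$, and pointwise limits in a functor category exist as soon as the target has them, irrespective of the size of the domain. Smallness of $\mathbf{Weil}_{k}$ is genuinely required only in the cartesian-closedness half, to guarantee that the end $\int_{W'}$ and the powers by the hom-sets $\mathbf{Weil}_{k}(W,W')$ are small limits in $\mathbf{U}$. You already make this point there; just drop the misplaced remark from the completeness paragraph.
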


\begin{proof}
The proof is tremendously similar to that of the familiar fact that the arrow
category of a complete and cartesian closed category is complete and cartesian
closed. That the category $\mathcal{K}_{\mathbf{U}}$\ is complete follows from
Theorem 7.5.2 in \cite{sch}. The cartesian closedness of $\mathcal{K}%
_{\mathbf{U}}$\ is discussed in a subsequent paper, but the reader is referred
to Exercise 1.3.7 in \cite{jac} for the cartesian closedness of the arrow
category of a complete and cartesian closed category.
\end{proof}

\begin{notation}
Given an object $W$ in the category $\mathbf{Weil}_{k}$, we denote by
\[
\mathbf{T}_{\mathbf{U}}^{W}:\mathcal{K}_{\mathbf{U}}\rightarrow\mathcal{K}%
_{\mathbf{U}}%
\]
the functor obtained as the composition with the functor
\[
\_\otimes_{k}W:\mathbf{Weil}_{k}\rightarrow\mathbf{Weil}_{k}%
\]
so that for any object $M$\ in the category $\mathcal{K}_{\mathbf{U}}$, we
have
\[
\mathbf{T}_{\mathbf{U}}^{W}\left(  M\right)  =M\left(  \_\otimes_{k}W\right)
\]

\end{notation}

It is easy to see that

\begin{proposition}
We have
\[
\mathbf{T}_{\mathbf{U}}^{W_{2}}\circ\mathbf{T}_{\mathbf{U}}^{W_{1}}%
=\mathbf{T}_{\mathbf{U}}^{W_{1}\otimes W_{2}}%
\]
for any objects $W_{1},W_{2}$ in the category $\mathbf{Weil}_{k}$.
\end{proposition}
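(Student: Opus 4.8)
The plan is to unfold the definition of $\mathbf{T}_{\mathbf{U}}^{W}$ as a precomposition and then to reduce the asserted identity of functors to the associativity and commutativity of the tensor product in $\mathbf{Weil}_{k}$. First I would recall from the preceding Notation that $\mathbf{T}_{\mathbf{U}}^{W}$ sends a functor $M:\mathbf{Weil}_{k}\rightarrow\mathbf{U}$ to its composite with the endofunctor $\_\otimes_{k}W:\mathbf{Weil}_{k}\rightarrow\mathbf{Weil}_{k}$; writing $F_{W}$ for this endofunctor, this reads $\mathbf{T}_{\mathbf{U}}^{W}(M)=M\circ F_{W}$.

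Next I would note that precomposition is contravariant in the functor one composes with: for every object $M$ of $\mathcal{K}_{\mathbf{U}}$ we have $\mathbf{T}_{\mathbf{U}}^{W_{2}}(\mathbf{T}_{\mathbf{U}}^{W_{1}}(M))=(M\circ F_{W_{1}})\circ F_{W_{2}}=M\circ(F_{W_{1}}\circ F_{W_{2}})$. The entire claim therefore collapses to the single identity of endofunctors $F_{W_{1}}\circ F_{W_{2}}=F_{W_{1}\otimes W_{2}}$ on $\mathbf{Weil}_{k}$, which must be checked both on objects and on morphisms. On an object $V$ it is the statement $(V\otimes_{k}W_{2})\otimes_{k}W_{1}=V\otimes_{k}(W_{1}\otimes_{k}W_{2})$, and on a morphism $\psi$ the corresponding statement for $\psi\otimes_{k}\mathrm{id}$.

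I would then dispatch the object-level identity by associativity followed by commutativity, $(V\otimes_{k}W_{2})\otimes_{k}W_{1}=V\otimes_{k}(W_{2}\otimes_{k}W_{1})=V\otimes_{k}(W_{1}\otimes_{k}W_{2})$, and observe that the very same pair of isomorphisms, being natural, handles the action on arrows; thus the two composite endofunctors coincide, and so do the two endofunctors of $\mathcal{K}_{\mathbf{U}}$ obtained from them by precomposition.

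The only real subtlety, and hence the point I expect to be the main obstacle, is that the associator and the symmetry of $\otimes_{k}$ are a priori coherent natural isomorphisms rather than literal equalities, so the displayed equality must be read either as the canonical such isomorphism or under the standard convention that renders $\otimes_{k}$ strictly associative and commutative. This is harmless and is in any case forced on us for consistency with the strictly stated axiom in clause 3 of the Definition of a DG-category; since $\otimes_{k}$ is the coproduct of commutative $k$-algebras, coherence of these comparison isomorphisms is automatic, so the composite isomorphism $F_{W_{1}}\circ F_{W_{2}}\Rightarrow F_{W_{1}\otimes W_{2}}$ is well defined and natural in $M$, which finishes the argument.
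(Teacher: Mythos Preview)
Your argument is correct and follows essentially the same route as the paper: unwind the definition of $\mathbf{T}_{\mathbf{U}}^{W}$ as precomposition with $\_\otimes_{k}W$ and reduce to associativity of $\otimes_{k}$. The paper's three-line computation simply writes $(\_\otimes_{k}W_{1})\otimes_{k}W_{2}=\_\otimes_{k}(W_{1}\otimes W_{2})$ and stops, suppressing both the contravariance of precomposition and the coherence issues you flag; your version is more scrupulous (and in particular you correctly observe that after tracking the contravariance one actually lands on $(V\otimes_{k}W_{2})\otimes_{k}W_{1}$, so a symmetry is needed in addition to associativity), but the underlying idea is identical.
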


\begin{proof}
We have
\begin{align*}
\mathbf{T}_{\mathbf{U}}^{W_{2}}\circ\mathbf{T}_{\mathbf{U}}^{W_{1}}  &
=\left(  \_\otimes_{k}W_{1}\right)  \otimes_{k}W_{2}\\
& =\_\otimes_{k}\left(  W_{1}\otimes W_{2}\right) \\
& =\mathbf{T}_{\mathbf{U}}^{W_{1}\otimes W_{2}}%
\end{align*}

\end{proof}

It is also easy to see that

\begin{proposition}
The functor
\[
\mathbf{T}_{\mathbf{U}}^{W}:\mathcal{K}_{\mathbf{U}}\rightarrow\mathcal{K}%
_{\mathbf{U}}%
\]
preserves limits for each object $W$ in the category $\mathbf{Weil}_{k}$.
\end{proposition}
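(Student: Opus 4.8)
The plan is to reduce everything to the elementary observation that limits in a functor category with complete codomain are formed objectwise. Since $\mathbf{U}$ is complete, the completeness of $\mathcal{K}_{\mathbf{U}}$ established in the preceding proposition is witnessed concretely by objectwise limits: for any small diagram $D:\mathcal{J}\rightarrow\mathcal{K}_{\mathbf{U}}$ the limit $L=\lim D$, equipped with its limiting cone $\left(  \pi_{j}\colon L\rightarrow D(j)\right)  _{j\in\mathcal{J}}$, satisfies $L(V)=\lim_{j\in\mathcal{J}}\bigl(D(j)(V)\bigr)$ for every object $V$ in $\mathbf{Weil}_{k}$, the cone $\left(  \pi_{j}(V)\right)  _{j}$ being the limiting cone of the $\mathbf{U}$-valued diagram $j\mapsto D(j)(V)$. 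I would record this objectwise description first, as it is the engine of the whole argument.

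Next I would unwind the definition of $\mathbf{T}_{\mathbf{U}}^{W}$, which is precomposition with the endofunctor $\_\otimes_{k}W$ of $\mathbf{Weil}_{k}$: for any $M$ in $\mathcal{K}_{\mathbf{U}}$ and any $V$ one has $\bigl(\mathbf{T}_{\mathbf{U}}^{W}M\bigr)(V)=M(V\otimes_{k}W)$. Combining this with the objectwise formula gives, for the limit $L=\lim D$ above,
\[
\bigl(\mathbf{T}_{\mathbf{U}}^{W}L\bigr)(V)=L(V\otimes_{k}W)=\lim_{j\in\mathcal{J}}\bigl(D(j)(V\otimes_{k}W)\bigr)=\lim_{j\in\mathcal{J}}\bigl(\mathbf{T}_{\mathbf{U}}^{W}D(j)\bigr)(V).
\]
Thus at every $V$ the object $\bigl(\mathbf{T}_{\mathbf{U}}^{W}L\bigr)(V)$, together with the images $\bigl(\mathbf{T}_{\mathbf{U}}^{W}\pi_{j}\bigr)(V)$ of the limiting projections, is precisely the objectwise limit of the diagram $\mathbf{T}_{\mathbf{U}}^{W}\circ D$ evaluated at $V$. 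Since this identification is natural in $V$ (both sides being evaluations of the same precomposition at the object $V\otimes_{k}W$), a second appeal to the objectwise description—now read in the direction of recognizing a limit—shows that $\mathbf{T}_{\mathbf{U}}^{W}L$ equipped with the cone $\bigl(\mathbf{T}_{\mathbf{U}}^{W}\pi_{j}\bigr)_{j}$ is a limit of $\mathbf{T}_{\mathbf{U}}^{W}\circ D$. Hence $\mathbf{T}_{\mathbf{U}}^{W}$ preserves the limit of $D$, and $D$ was arbitrary.

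I do not expect a genuine obstacle here: the content is entirely formal, the only points demanding a line of care being the naturality in $V$ of the chain of identifications and the verification that limiting cones are carried to limiting cones, both of which fall straight out of the objectwise description. Should a more structural presentation be preferred, one may instead observe that $\mathbf{T}_{\mathbf{U}}^{W}$, being precomposition along $\_\otimes_{k}W$, admits a left adjoint given by the left Kan extension along $\_\otimes_{k}W$, whence it preserves all limits as a right adjoint; this route, however, presupposes enough colimits in $\mathbf{U}$ to form the Kan extension, so I would favour the direct objectwise verification, which relies only on the completeness already in hand.
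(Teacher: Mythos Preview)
Your argument is correct and is essentially the same as the paper's: the paper simply cites 7.5.2 and 7.5.3 of Schubert's \emph{Categories} for the fact that limits in the functor category $\mathcal{K}_{\mathbf{U}}$ are computed objectwise and that evaluation preserves them, while you spell this out explicitly. Your direct objectwise verification is exactly the content behind that citation, so there is no substantive difference in approach.
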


\begin{proof}
This follows easily from 7.5.2 and 7.5.3 in \cite{sch}.
\end{proof}

\begin{proposition}
Given a morphism $\varphi:W_{1}\rightarrow W_{2}$ in the category
$\mathbf{Weil}_{k}$ and an object $M$\ in the category $\mathcal{K}%
_{\mathbf{U}}$, the assignment of the morphism
\[
M\left(  W\otimes_{k}\varphi\right)  :M\left(  W\otimes_{k}W_{1}\right)
\rightarrow M\left(  W\otimes_{k}W_{2}\right)
\]
in the category $\mathbf{U}$ to each object $W$\ in the category
$\mathbf{Weil}_{k}$ is a morphism
\[
\mathbf{T}_{\mathbf{U}}^{W_{1}}\left(  M\right)  \rightarrow\mathbf{T}%
_{\mathbf{U}}^{W_{2}}\left(  M\right)
\]
in the category $\mathcal{K}_{\mathbf{U}}$, which we denote by $\alpha
_{\varphi}^{\mathbf{U}}\left(  M\right)  $.
\end{proposition}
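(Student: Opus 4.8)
The plan is to check that the indicated family of $\mathbf{U}$-morphisms is genuinely a natural transformation, since this is exactly what a morphism in $\mathcal{K}_{\mathbf{U}}$ amounts to. First I would unwind the two functors involved: by the preceding Notation, $\mathbf{T}_{\mathbf{U}}^{W_1}(M)$ sends an object $W$ to $M(W\otimes_k W_1)$ and a morphism $\rho:W\to W'$ to $M(\rho\otimes_k W_1)$, and symmetrically $\mathbf{T}_{\mathbf{U}}^{W_2}(M)$ sends $W$ to $M(W\otimes_k W_2)$ and $\rho$ to $M(\rho\otimes_k W_2)$. With this in hand, each proposed component $M(W\otimes_k\varphi):M(W\otimes_k W_1)\to M(W\otimes_k W_2)$ already has the correct source and target as a morphism in $\mathbf{U}$, so the substantive content is naturality alone.

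Next I would write down the naturality square associated with an arbitrary morphism $\rho:W\to W'$ in $\mathbf{Weil}_k$. Concretely, the requirement is the equation
\[
M(\rho\otimes_k W_2)\circ M(W\otimes_k\varphi)=M(W'\otimes_k\varphi)\circ M(\rho\otimes_k W_1).
\]
Because $M$ is a functor, both sides collapse to $M$ applied to a single composite in $\mathbf{Weil}_k$, so it suffices to prove the identity
\[
(\rho\otimes_k W_2)\circ(W\otimes_k\varphi)=(W'\otimes_k\varphi)\circ(\rho\otimes_k W_1)
\]
in $\mathbf{Weil}_k$.

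Finally, I would close the argument by appealing to the fact that $\otimes_k$ is a bifunctor on $\mathbf{Weil}_k$: its interchange (middle-four) law shows that both composites above factor the single morphism $\rho\otimes_k\varphi:W\otimes_k W_1\to W'\otimes_k W_2$, hence agree. Applying $M$ then yields the desired equality, establishing naturality, and the resulting natural transformation is by definition the morphism $\alpha_{\varphi}^{\mathbf{U}}(M):\mathbf{T}_{\mathbf{U}}^{W_1}(M)\to\mathbf{T}_{\mathbf{U}}^{W_2}(M)$ in $\mathcal{K}_{\mathbf{U}}$. I do not anticipate any serious obstacle here; the only points demanding care are bookkeeping ones, namely keeping the variances straight (every functor in sight is covariant) and invoking the full bifunctoriality of the tensor product rather than merely its action in a single variable.
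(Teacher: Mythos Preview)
Your proposal is correct and follows essentially the same approach as the paper: both verify naturality by checking that the square built from an arbitrary morphism $\psi:W\to W'$ (your $\rho$) commutes. The paper simply asserts commutativity of that square, whereas you supply the underlying reason, reducing it via functoriality of $M$ to the bifunctor interchange law for $\otimes_k$; this is a welcome elaboration rather than a different argument.
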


\begin{proof}
Given a morphism $\psi:W\rightarrow W^{\prime}$ in the category $\mathbf{Weil}%
_{k}$, the diagram
\[%
\begin{array}
[c]{ccc}%
M\left(  W\otimes_{k}W_{1}\right)  & \underrightarrow{M\left(  W\otimes
_{k}\varphi\right)  } & M\left(  W\otimes_{k}W_{2}\right) \\
M\left(  \psi\otimes_{k}W_{1}\right)  \downarrow &  & \downarrow M\left(
\psi\otimes_{k}W_{2}\right) \\
M\left(  W^{\prime}\otimes_{k}W_{1}\right)  & \overrightarrow{M\left(
W^{\prime}\otimes_{k}\varphi\right)  } & M\left(  W^{\prime}\otimes_{k}%
W_{2}\right)
\end{array}
\]
is commutative, so that the desired conclusion follows.
\end{proof}

\begin{proposition}
Given a morphism $\varphi:W_{1}\rightarrow W_{2}$ in the category
$\mathbf{Weil}_{k}$, the assignment of the morphism
\[
\alpha_{\varphi}^{\mathbf{U}}\left(  M\right)  :\mathbf{T}_{\mathbf{U}}%
^{W_{1}}\left(  M\right)  \rightarrow\mathbf{T}_{\mathbf{U}}^{W_{2}}\left(
M\right)
\]
in the category $\mathcal{K}_{\mathbf{U}}$\ to each object $W$\ in the
category $\mathbf{Weil}_{k}$ is a natural transformation
\[
\mathbf{T}_{\mathbf{U}}^{W_{1}}\Rightarrow\mathbf{T}_{\mathbf{U}}^{W_{2}}%
\]
which we denote by $\alpha_{\varphi}^{\mathbf{U}}$.
\end{proposition}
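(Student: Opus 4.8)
The plan is to verify the one remaining condition: that the family $\left( \alpha_{\varphi}^{\mathbf{U}}\left( M\right) \right) $, now indexed by the objects $M$ of the category $\mathcal{K}_{\mathbf{U}}$, obeys the naturality axiom. The preceding proposition already furnishes, for each $M$, a bona fide morphism $\alpha_{\varphi}^{\mathbf{U}}\left( M\right) :\mathbf{T}_{\mathbf{U}}^{W_{1}}\left( M\right) \rightarrow\mathbf{T}_{\mathbf{U}}^{W_{2}}\left( M\right) $ in $\mathcal{K}_{\mathbf{U}}$, so all that is left is to show that for every morphism $f:M\rightarrow N$ in $\mathcal{K}_{\mathbf{U}}$ the square with sides $\alpha_{\varphi}^{\mathbf{U}}\left( M\right) $, $\mathbf{T}_{\mathbf{U}}^{W_{2}}\left( f\right) $, $\alpha_{\varphi}^{\mathbf{U}}\left( N\right) $ and $\mathbf{T}_{\mathbf{U}}^{W_{1}}\left( f\right) $ commutes in $\mathcal{K}_{\mathbf{U}}$.

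The decisive observation I would exploit is that a morphism in $\mathcal{K}_{\mathbf{U}}$ is itself a natural transformation of functors $\mathbf{Weil}_{k}\rightarrow\mathbf{U}$, and that equality of two morphisms of $\mathcal{K}_{\mathbf{U}}$ is tested componentwise over the objects $W$ of $\mathbf{Weil}_{k}$. I would therefore first unwind the components at an arbitrary object $W$. Writing $f_{X}$ for the component of $f:M\Rightarrow N$ at an object $X$ of $\mathbf{Weil}_{k}$, the $W$-component of $\mathbf{T}_{\mathbf{U}}^{W_{i}}\left( f\right) $ is $f_{W\otimes_{k}W_{i}}$ (this being composition, i.e. whiskering, with $\_\otimes_{k}W_{i}$), while the $W$-component of $\alpha_{\varphi}^{\mathbf{U}}\left( M\right) $ is $M\left( W\otimes_{k}\varphi\right) $ by the previous proposition. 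Hence, evaluated at $W$, the square to be verified is exactly
\[
\begin{array}
[c]{ccc}
M\left(  W\otimes_{k}W_{1}\right)  & \overrightarrow{M\left(  W\otimes
_{k}\varphi\right)  } & M\left(  W\otimes_{k}W_{2}\right) \\
f_{W\otimes_{k}W_{1}}\downarrow &  & \downarrow f_{W\otimes_{k}W_{2}} \\
N\left(  W\otimes_{k}W_{1}\right)  & \overrightarrow{N\left(  W\otimes
_{k}\varphi\right)  } & N\left(  W\otimes_{k}W_{2}\right)
\end{array}
\]

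Finally I would note that this is nothing but the naturality square of $f$ for the morphism $W\otimes_{k}\varphi:W\otimes_{k}W_{1}\rightarrow W\otimes_{k}W_{2}$ of $\mathbf{Weil}_{k}$, which commutes precisely because $f$ is a natural transformation. Since $W$ was arbitrary, the square commutes in $\mathcal{K}_{\mathbf{U}}$, which is the required naturality.

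There is no genuine obstacle here; the only thing demanding care is the bookkeeping of the two nested layers of naturality — the naturality to be established lives in $\mathcal{K}_{\mathbf{U}}$, whereas its verification descends to componentwise naturality over $\mathbf{Weil}_{k}$ — together with the correct reading of the components of $\mathbf{T}_{\mathbf{U}}^{W_{i}}\left( f\right) $ and of $\alpha_{\varphi}^{\mathbf{U}}\left( M\right) $ off their definitions. Once these identifications are pinned down, the commutativity is literally the defining property of $f$.
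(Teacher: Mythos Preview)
Your proof is correct and follows essentially the same approach as the paper: both verify naturality in $M$ by evaluating at an arbitrary Weil algebra $W$ and observing that the resulting square is precisely the naturality square of $f$ with respect to the morphism $W\otimes_{k}\varphi$. Your version is merely more explicit about why that square commutes and about the bookkeeping between the two levels of naturality.
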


\begin{proof}
Given a morphism $f:M_{1}\rightarrow M_{2}$ in the category $\mathcal{K}%
_{\mathbf{U}}$, the diagram
\[%
\begin{array}
[c]{ccc}%
M_{1}\left(  W\otimes_{k}W_{1}\right)  & \underrightarrow{M_{1}\left(
W\otimes_{k}\varphi\right)  } & M_{1}\left(  W\otimes_{k}W_{2}\right) \\
f_{W\otimes_{k}W_{1}}\downarrow &  & \downarrow f_{W\otimes_{k}W_{2}}\\
M_{2}\left(  W\otimes_{k}W_{1}\right)  & \overrightarrow{M_{2}\left(
W\otimes_{k}\varphi\right)  } & M_{2}\left(  W\otimes_{k}W_{2}\right)
\end{array}
\]
is commutative, so that the desired conclusion follows.
\end{proof}

It is easy to see that

\begin{proposition}
We have
\[
\alpha_{\psi}^{\mathbf{U}}\circ\alpha_{\varphi}^{\mathbf{U}}=\alpha_{\psi
\circ\varphi}^{\mathbf{U}}%
\]
for any morhisms $\varphi:W_{1}\rightarrow W_{2}$ and $\psi:W_{2}\rightarrow
W_{3}$ in the category $\mathbf{Weil}_{k}$.
\end{proposition}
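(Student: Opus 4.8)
The plan is to establish the identity componentwise, peeling off the two layers of naturality in play. Both $\alpha_{\varphi}^{\mathbf{U}}$ and $\alpha_{\psi}^{\mathbf{U}}$ are natural transformations between endofunctors of $\mathcal{K}_{\mathbf{U}}$, and their composite is the vertical composition through the common functor $\mathbf{T}_{\mathbf{U}}^{W_{2}}$. Accordingly, I would first reduce the claimed equality to checking that, for every object $M$ in $\mathcal{K}_{\mathbf{U}}$, the morphisms $\alpha_{\psi}^{\mathbf{U}}\left( M\right) \circ\alpha_{\varphi}^{\mathbf{U}}\left( M\right) $ and $\alpha_{\psi\circ\varphi}^{\mathbf{U}}\left( M\right) $ agree in $\mathcal{K}_{\mathbf{U}}$; and this in turn reduces to comparing their components at each object $W$ in $\mathbf{Weil}_{k}$, since morphisms in $\mathcal{K}_{\mathbf{U}}$ are themselves natural transformations and hence determined by their components.

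Having fixed $M$ and $W$, I would simply unwind the definitions supplied by the preceding two propositions. The component of $\alpha_{\varphi}^{\mathbf{U}}\left( M\right) $ at $W$ is $M\left( W\otimes_{k}\varphi\right) $ and that of $\alpha_{\psi}^{\mathbf{U}}\left( M\right) $ at $W$ is $M\left( W\otimes_{k}\psi\right) $, so the component of the left-hand side at $W$ is the composite
\[
M\left( W\otimes_{k}\psi\right) \circ M\left( W\otimes_{k}\varphi\right) .
\]
The key step is then a twofold use of functoriality: functoriality of $M$ collapses this composite to $M\left( \left( W\otimes_{k}\psi\right) \circ\left( W\otimes_{k}\varphi\right) \right) $, and functoriality of the endofunctor $W\otimes_{k}\_$ on $\mathbf{Weil}_{k}$ yields $\left( W\otimes_{k}\psi\right) \circ\left( W\otimes_{k}\varphi\right) =W\otimes_{k}\left( \psi\circ\varphi\right) $. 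Hence the component equals $M\left( W\otimes_{k}\left( \psi\circ\varphi\right) \right) $, which is by definition precisely the component of $\alpha_{\psi\circ\varphi}^{\mathbf{U}}\left( M\right) $ at $W$.

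The argument is essentially bookkeeping, and the only real care needed — the main, and quite modest, obstacle — lies in keeping the two indexing layers straight: $\alpha_{\left( -\right) }^{\mathbf{U}}$ is natural in the variable $M\in\mathcal{K}_{\mathbf{U}}$, while its value at $M$ is itself a morphism of $\mathcal{K}_{\mathbf{U}}$, i.e. a natural transformation indexed by $W\in\mathbf{Weil}_{k}$. Once one commits to resolving the composite at the innermost level, with $M$ and $W$ both fixed, no coherence issue survives, because the whole identity rests on the elementary functoriality relation $W\otimes_{k}\left( \psi\circ\varphi\right) =\left( W\otimes_{k}\psi\right) \circ\left( W\otimes_{k}\varphi\right) $, exactly mirroring the computation already carried out in the proof that $\mathbf{T}_{\mathbf{U}}^{W_{2}}\circ\mathbf{T}_{\mathbf{U}}^{W_{1}}=\mathbf{T}_{\mathbf{U}}^{W_{1}\otimes W_{2}}$.
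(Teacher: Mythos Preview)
Your proof is correct and follows essentially the same approach as the paper: for each object $M$ in $\mathcal{K}_{\mathbf{U}}$ one reduces the identity to $M\left(\_\otimes_{k}\psi\right)\circ M\left(\_\otimes_{k}\varphi\right)=M\left(\_\otimes_{k}\left(\psi\circ\varphi\right)\right)$, which is exactly the functoriality argument you spell out componentwise at each $W$. The paper merely states this one-line identity, whereas you have unpacked the two layers of naturality explicitly, but the content is the same.
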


\begin{proof}
Given an object $M$\ in the category $\mathcal{K}_{\mathbf{U}}$, we have
\[
M\left(  \_\otimes_{k}\psi\right)  \circ M\left(  \_\otimes_{k}\varphi\right)
=M\left(  \_\otimes_{k}\left(  \psi\circ\varphi\right)  \right)
\]
so that the desired conclusion follows.
\end{proof}

\begin{notation}
We denote by $\mathbb{R}_{\mathbf{U}}$ the functor
\[
\mathbb{R}\underline{\mathbb{\otimes}}_{k}\_:\mathbf{Weil}_{k}\rightarrow
\mathbf{U}%
\]

\end{notation}

It is easy to see that

\begin{proposition}
We have
\[
\mathbf{T}_{\mathbf{U}}^{W}\left(  \mathbb{R}_{\mathbf{U}}\right)
=\mathbb{R}_{\mathbf{U}}\underline{\otimes}_{k}W
\]
for any object $W$\ in the category $\mathbf{Weil}_{k}$.
\end{proposition}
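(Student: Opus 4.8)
The plan is to prove this equality of functors $\mathbf{Weil}_{k}\rightarrow\mathbf{U}$ by evaluating both sides at an arbitrary object $V$ of $\mathbf{Weil}_{k}$ and exhibiting a natural identification of the resulting objects of $\mathbf{U}$. First I would unwind the left-hand side, which is purely formal: by the defining equation $\mathbf{T}_{\mathbf{U}}^{W}(M)=M(\_\otimes_{k}W)$ we have $\mathbf{T}_{\mathbf{U}}^{W}(\mathbb{R}_{\mathbf{U}})(V)=\mathbb{R}_{\mathbf{U}}(V\otimes_{k}W)$, and by the definition of $\mathbb{R}_{\mathbf{U}}$ as $\mathbb{R}\underline{\otimes}_{k}\_$ this equals $\mathbb{R}\underline{\otimes}_{k}(V\otimes_{k}W)$.

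The substance lies in the right-hand side. Here $\mathbb{R}_{\mathbf{U}}\underline{\otimes}_{k}W$ denotes the value at $W$ of the canonical functor $\mathbb{R}_{\mathbf{U}}\underline{\otimes}_{k}\_$ attached, as in \S\ref{s2}, to the $k$-algebra object $\mathbb{R}_{\mathbf{U}}$ in the left exact category $\mathcal{K}_{\mathbf{U}}$. The key observation I would exploit is that this canonical functor is built entirely out of finite limits together with the $k$-algebra structure maps of $\mathbb{R}_{\mathbf{U}}$, and that both ingredients are computed objectwise in the functor category $\mathcal{K}_{\mathbf{U}}$. Limits are pointwise because $\mathbf{U}$ is complete, which is precisely what makes $\mathcal{K}_{\mathbf{U}}$ complete; and the $k$-algebra structure on $\mathbb{R}_{\mathbf{U}}$ is by construction the pointwise one, since $\mathbb{R}_{\mathbf{U}}(V)=\mathbb{R}\underline{\otimes}_{k}V$ is a $k$-algebra object of $\mathbf{U}$ for each $V$. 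Consequently I would obtain $(\mathbb{R}_{\mathbf{U}}\underline{\otimes}_{k}W)(V)=(\mathbb{R}_{\mathbf{U}}(V))\underline{\otimes}_{k}W=(\mathbb{R}\underline{\otimes}_{k}V)\underline{\otimes}_{k}W$.

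It then remains to identify $(\mathbb{R}\underline{\otimes}_{k}V)\underline{\otimes}_{k}W$ with $\mathbb{R}\underline{\otimes}_{k}(V\otimes_{k}W)$ inside $\mathbf{U}$. This is the associativity of the canonical functor with respect to the tensor product of Weil algebras, which I would invoke from the treatment in \S\,1.16 of \cite{kock}; it is the same compatibility that underlies the composition law $\mathbf{T}_{\mathbf{U}}^{W_{2}}\circ\mathbf{T}_{\mathbf{U}}^{W_{1}}=\mathbf{T}_{\mathbf{U}}^{W_{1}\otimes W_{2}}$ established above. Combining the three identifications yields $(\mathbb{R}_{\mathbf{U}}\underline{\otimes}_{k}W)(V)=\mathbb{R}\underline{\otimes}_{k}(V\otimes_{k}W)=\mathbf{T}_{\mathbf{U}}^{W}(\mathbb{R}_{\mathbf{U}})(V)$, and I would close by checking that each identification is natural in $V$, so that they assemble into an equality of functors rather than a merely objectwise one.

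The main obstacle I anticipate is the middle step: making rigorous that the canonical functor $\underline{\otimes}_{k}W$ is evaluated pointwise in $\mathcal{K}_{\mathbf{U}}$. This forces me to be explicit that $\mathbb{R}_{\mathbf{U}}$ carries the pointwise $k$-algebra structure and that the finite-limit presentation used to build $\mathbb{R}_{\mathbf{U}}\underline{\otimes}_{k}W$ is preserved by evaluation at $V$, which is exactly the fact that limits in a functor category into a complete category are computed pointwise. Once this is secured, the tensor-associativity of $\mathbb{R}\underline{\otimes}_{k}\_$ over $\mathbf{U}$ finishes the argument with no further computation.
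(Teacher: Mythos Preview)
The paper gives no proof of this proposition at all; it is simply prefaced by ``It is easy to see that'' and left unargued. Your proposal supplies exactly the natural unwinding the paper presupposes: evaluate both sides at an arbitrary Weil algebra $V$, use the definitions to get $\mathbb{R}\underline{\otimes}_{k}(V\otimes_{k}W)$ on the left, use pointwise computation of limits (hence of the canonical $\underline{\otimes}_{k}$ construction) in $\mathcal{K}_{\mathbf{U}}$ to get $(\mathbb{R}\underline{\otimes}_{k}V)\underline{\otimes}_{k}W$ on the right, and then invoke the associativity of $\mathbb{R}\underline{\otimes}_{k}\_$ with respect to the tensor product of Weil algebras. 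This is correct, and since the paper offers nothing to compare against, your account is strictly more informative; the only remark is that the ``main obstacle'' you flag is routine once one recalls that the canonical functor $\underline{\otimes}_{k}W$ of Kock is constructed from finite products and the algebra structure maps, all of which are computed objectwise in a functor category into a complete $\mathbf{U}$.
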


It is also easy to see that

\begin{proposition}
We have
\[
\alpha_{\varphi}^{\mathbf{U}}\left(  \mathbb{R}_{\mathbf{U}}\right)
=\mathbb{R}_{\mathbf{U}}\underline{\otimes}_{k}\varphi
\]
for any morphism $\varphi:W_{1}\rightarrow W_{2}$ in the category
$\mathbf{Weil}_{k}$.
\end{proposition}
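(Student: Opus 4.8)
The plan is to exploit the fact that a morphism in $\mathcal{K}_{\mathbf{U}}$ is nothing but a natural transformation between functors $\mathbf{Weil}_{k}\rightarrow\mathbf{U}$, so that the asserted equality of the two morphisms $\mathbf{T}_{\mathbf{U}}^{W_{1}}\left(\mathbb{R}_{\mathbf{U}}\right)\rightarrow\mathbf{T}_{\mathbf{U}}^{W_{2}}\left(\mathbb{R}_{\mathbf{U}}\right)$ may be checked component by component at each object $W$ of $\mathbf{Weil}_{k}$. First I would record, using the immediately preceding proposition, that both sides do have $\mathbf{T}_{\mathbf{U}}^{W_{1}}\left(\mathbb{R}_{\mathbf{U}}\right)=\mathbb{R}_{\mathbf{U}}\underline{\otimes}_{k}W_{1}$ as source and $\mathbf{T}_{\mathbf{U}}^{W_{2}}\left(\mathbb{R}_{\mathbf{U}}\right)=\mathbb{R}_{\mathbf{U}}\underline{\otimes}_{k}W_{2}$ as target, so that the comparison is at least well posed.

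Next I would compute the $W$-component of the left-hand side. By the very definition of $\alpha_{\varphi}^{\mathbf{U}}\left(M\right)$ in the proposition where it was introduced, specialised to $M=\mathbb{R}_{\mathbf{U}}$, this component is the morphism
\[
\mathbb{R}_{\mathbf{U}}\left(W\otimes_{k}\varphi\right):\mathbb{R}_{\mathbf{U}}\left(W\otimes_{k}W_{1}\right)\rightarrow\mathbb{R}_{\mathbf{U}}\left(W\otimes_{k}W_{2}\right)
\]
and, since $\mathbb{R}_{\mathbf{U}}$ is by definition the functor $\mathbb{R}\underline{\otimes}_{k}\_$, this is nothing but $\mathbb{R}\underline{\otimes}_{k}\left(W\otimes_{k}\varphi\right)$, namely the canonical functor applied to the morphism $\mathrm{id}_{W}\otimes_{k}\varphi$ in $\mathbf{Weil}_{k}$.

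I would then compute the $W$-component of the right-hand side. Because $\mathcal{K}_{\mathbf{U}}$ is complete with limits formed pointwise (the same facts from Schubert underlying the limit preservation already used above), the evaluation functor sending $M\mapsto M\left(W\right)$ preserves all limits; and the canonical functor $\underline{\otimes}_{k}$ is manufactured entirely out of finite limits together with the $k$-algebra structure, so evaluation at $W$ commutes with it and carries $\mathbb{R}_{\mathbf{U}}\underline{\otimes}_{k}\varphi$ to $\left(\mathbb{R}\underline{\otimes}_{k}W\right)\underline{\otimes}_{k}\varphi$. Invoking the associativity of the canonical construction, that is the natural isomorphism $\left(\mathbb{R}\underline{\otimes}_{k}W\right)\underline{\otimes}_{k}W^{\prime}\cong\mathbb{R}\underline{\otimes}_{k}\left(W\otimes_{k}W^{\prime}\right)$ of Weil-algebra tensor products, applied to $\varphi:W_{1}\rightarrow W_{2}$, this matches $\mathbb{R}\underline{\otimes}_{k}\left(W\otimes_{k}\varphi\right)$, which is exactly the component already found for the left-hand side.

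The main obstacle will be this last compatibility, namely that evaluation at $W$ commutes with the canonical functor $\underline{\otimes}_{k}$ and that the latter is associative with respect to the tensor product of Weil algebras in the sense just displayed; this is the only point that is not a bare unwinding of definitions, and it rests on the pointwise computation of limits in $\mathcal{K}_{\mathbf{U}}$. Once that naturality square is in hand, the two families of components agree for every $W$; since equal components force equal natural transformations, and both sides are already known to be morphisms of $\mathcal{K}_{\mathbf{U}}$, the desired equality $\alpha_{\varphi}^{\mathbf{U}}\left(\mathbb{R}_{\mathbf{U}}\right)=\mathbb{R}_{\mathbf{U}}\underline{\otimes}_{k}\varphi$ follows at once.
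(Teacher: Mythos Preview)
Your proof is correct. The paper itself offers no proof for this proposition beyond the phrase ``It is also easy to see that'', so your componentwise verification supplies more than the paper does. You have correctly isolated the one non-formal ingredient: that evaluation at $W$ commutes with the canonical construction $\underline{\otimes}_{k}$ (because limits in $\mathcal{K}_{\mathbf{U}}$ are computed pointwise) together with the natural associativity $(\mathbb{R}\underline{\otimes}_{k}W)\underline{\otimes}_{k}W'\cong\mathbb{R}\underline{\otimes}_{k}(W\otimes_{k}W')$. This is also precisely what underlies the preceding unproved identity $\mathbf{T}_{\mathbf{U}}^{W}(\mathbb{R}_{\mathbf{U}})=\mathbb{R}_{\mathbf{U}}\underline{\otimes}_{k}W$, so your argument in effect justifies both claims at once.
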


Now we recapitulate as follows.

\begin{theorem}
The quadruple
\[
\left(  \mathcal{K}_{\mathbf{U}},\mathbb{R}_{\mathbf{U}},\mathbf{T}%
_{\mathbf{U}},\alpha^{\mathbf{U}}\right)
\]
is a DG-category.
\end{theorem}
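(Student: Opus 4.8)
The plan is to verify, clause by clause, the six defining conditions of a DG-category for the quadruple $\left(  \mathcal{K}_{\mathbf{U}},\mathbb{R}_{\mathbf{U}},\mathbf{T}_{\mathbf{U}},\alpha^{\mathbf{U}}\right)$, since nearly every clause has already been prepared by one of the preceding propositions. Condition (1) is immediate from the Proposition asserting that $\mathcal{K}_{\mathbf{U}}$ is complete and cartesian closed, once one observes that completeness entails the existence of all finite limits and hence left exactness. For condition (3), the composition law $\mathbf{T}_{\mathbf{U}}^{W_{2}}\circ\mathbf{T}_{\mathbf{U}}^{W_{1}}=\mathbf{T}_{\mathbf{U}}^{W_{1}\otimes_{k}W_{2}}$ is exactly the content of the Proposition proved by the display $\left(  \_\otimes_{k}W_{1}\right)  \otimes_{k}W_{2}=\_\otimes_{k}\left(  W_{1}\otimes W_{2}\right)$, while the left exactness of each $\mathbf{T}_{\mathbf{U}}^{W}$ is the Proposition on limit preservation. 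Condition (4) is the Proposition stating $\mathbf{T}_{\mathbf{U}}^{W}\left(  \mathbb{R}_{\mathbf{U}}\right)  =\mathbb{R}_{\mathbf{U}}\underline{\otimes}_{k}W$, and condition (6) is the Proposition stating $\alpha_{\varphi}^{\mathbf{U}}\left(  \mathbb{R}_{\mathbf{U}}\right)  =\mathbb{R}_{\mathbf{U}}\underline{\otimes}_{k}\varphi$. For condition (5), that each $\alpha_{\varphi}^{\mathbf{U}}$ is a natural transformation $\mathbf{T}_{\mathbf{U}}^{W_{1}}\Rightarrow\mathbf{T}_{\mathbf{U}}^{W_{2}}$ is the Proposition whose proof exhibits the relevant commutative square, and the associativity relation $\alpha_{\psi}^{\mathbf{U}}\circ\alpha_{\varphi}^{\mathbf{U}}=\alpha_{\psi\circ\varphi}^{\mathbf{U}}$ is the Proposition that follows it.

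This leaves only a handful of clauses not coinciding verbatim with an earlier statement, and it is there that a modicum of genuine checking is required. First, for condition (2) I would verify that $\mathbb{R}_{\mathbf{U}}$ is a commutative $k$-algebra object in $\mathcal{K}_{\mathbf{U}}$. By construction $\mathbb{R}\underline{\otimes}_{k}\_$ factors through the category of $k$-algebra objects and their homomorphisms in $\mathbf{U}$, as recalled in \S\ref{s2}, so each value $\mathbb{R}_{\mathbf{U}}\left(  W\right)$ carries a commutative $k$-algebra structure whose structure maps are natural in $W$. Since $\mathbf{U}$ is complete, products in $\mathcal{K}_{\mathbf{U}}$ are computed objectwise, so these pointwise operations assemble into genuine morphisms $\mathbb{R}_{\mathbf{U}}\times\mathbb{R}_{\mathbf{U}}\rightarrow\mathbb{R}_{\mathbf{U}}$ in $\mathcal{K}_{\mathbf{U}}$; the algebra axioms, holding objectwise in $\mathbf{U}$, then hold as identities of natural transformations, endowing $\mathbb{R}_{\mathbf{U}}$ with the desired algebra-object structure.

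Second, the normalizing clauses — that $\mathbf{T}_{\mathbf{U}}^{k}$ is the identity functor in condition (3) and that $\alpha_{\mathrm{id}_{W}}^{\mathbf{U}}=\mathrm{id}_{\mathbf{T}_{\mathbf{U}}^{W}}$ in condition (5) — both reduce to the fact that $k$ is the unit for $\otimes_{k}$, so that $\_\otimes_{k}k:\mathbf{Weil}_{k}\rightarrow\mathbf{Weil}_{k}$ is the identity functor and $\_\otimes_{k}\mathrm{id}_{W}$ the identity transformation; precomposition with $M$ then yields $\mathbf{T}_{\mathbf{U}}^{k}\left(  M\right)  =M$ and $\alpha_{\mathrm{id}_{W}}^{\mathbf{U}}\left(  M\right)  =\mathrm{id}$. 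I expect the main obstacle to lie precisely here: these are strict identities only if $\otimes_{k}$ is normalized so that $W\otimes_{k}k$ equals $W$ on the nose, rather than merely up to canonical isomorphism. One must therefore either adopt such a strictification — which is in any case implicitly assumed by the equalities already used to prove $\mathbf{T}_{\mathbf{U}}^{W_{2}}\circ\mathbf{T}_{\mathbf{U}}^{W_{1}}=\mathbf{T}_{\mathbf{U}}^{W_{1}\otimes_{k}W_{2}}$ — or else read the defining equations of a DG-category up to the coherent natural isomorphisms. With that convention fixed, all six conditions are met and the quadruple is a DG-category.
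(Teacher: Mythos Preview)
Your proposal is correct and follows exactly the paper's intent: the theorem is presented as a \emph{recapitulation} with no separate proof, so the argument is precisely the assembly of the preceding propositions that you carry out clause by clause. If anything you are more careful than the paper, since you explicitly verify condition (2) on $\mathbb{R}_{\mathbf{U}}$ and the unit/identity normalizations that the paper leaves implicit.
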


\begin{example}
Let $\mathbf{U}=\mathbf{Smooth}$ with $k=\mathbf{R}$ and $\mathbb{R}%
=\mathbf{R}$.We denote by $\mathbf{Mf}$\ the category of smooth manifolds,
which can be regarded as a subcategory of $\mathbf{Smooth}$. It is well known
(cf. Theorem 31.7 in \cite{kri}) that there is a bifunctor
\[
T:\mathbf{Weil}_{\mathbf{R}}\times\mathbf{Mf}\rightarrow\mathbf{Mf}%
\]
Therefore each smooth manifold $M$\ can be regarded as the functor
\[
T(\_,M):\mathbf{Weil}_{\mathbf{R}}\rightarrow\mathbf{Smooth}%
\]
which is an object in $\mathcal{K}_{\mathbf{Smooth}}$. This gives rise to a
functor from the category $\mathbf{Mf}$\ to the category $\mathcal{K}%
_{\mathbf{Smooth}}$.
\end{example}

\end{document}